\documentclass[final,leqno]{siamltex704}
\usepackage{amsmath}
\usepackage{graphicx}
\usepackage[notcite,notref]{showkeys}
\usepackage{mathrsfs}
\usepackage{appendix}
\usepackage{float}
\usepackage{amsfonts,amssymb}
\usepackage{dsfont}
\usepackage{pifont}
\usepackage{wrapfig} 
\usepackage{hyperref}
\usepackage{multirow}
\numberwithin{equation}{section}
\usepackage{tikz}
\def\3bar{{|\hspace{-.02in}|\hspace{-.02in}|}}

\def\T{{\mathcal{T}}}

\def\bft{\textbf{t}}
\def\bfn{\textbf{n}}

\def\bn{{\mathbf{n}}}

\setlength{\parindent}{0.25in} \setlength{\parskip}{0.08in}

\title{Superconvergence of Ritz-Galerkin Finite Element Approximations for Second Order Elliptic Problems}

\author{Chunmei Wang
\thanks{Department of Mathematics, Texas State University, San Marcos, TX 78666, USA. The research of Chunmei Wang was partially supported by National Science Foundation Awards DMS-1522586 and DMS-1648171.}}


\begin{document}

\maketitle

\begin{abstract}
In this paper, the author derives an $O(h^4)$-superconvergence for the piecewise linear Ritz-Galerkin finite element approximations for the second order elliptic equation $-\nabla \cdot(A\nabla u)= f$ equipped with Dirichlet boundary conditions. This superconvergence error estimate is established between the finite element solution and the usual Lagrange nodal point interpolation of the exact solution, and thus the superconvergence at the nodal points of each element. The result is based on a condition for the finite element partition characterized by the coefficient tensor $A$ and the usual shape functions on each element, called $A$-equilateral assumption in this paper. Several examples are presented for the coefficient tensor $A$ and finite element triangulations which satisfy the conditions necessary for superconvergence. Some numerical experiments are conducted to confirm this new theory of superconvergence.
\end{abstract}

\begin{keywords} finite element method, error estimate, superconvergence, second order elliptic problem, Euler-Maclaurin formula.
\end{keywords}

\begin{AMS}
Primary, 65N30, 65N15, 65N12, 74N20; Secondary, 35B45, 35J50,
35J35
\end{AMS}

\pagestyle{myheadings}

\section{Introduction}
Superconvergence is a phenomena in numerical methods that refer to faster than normal convergence for the approximate solutions arising from the numerical procedures. The research on superconvergence for finite element methods has been conducted extensively by many researchers over the last four decades. To the best of our knowledge, this phenomenon was first addressed in \cite{or1969}, and the term ``superconvergence'' was first used in \cite{dd1973}. Since then, superconvergence has become to be an active research topic in finite element methods for partial differential equations; see \cite{bs1997, cz2015, c2015, cw2003, elw1991, kn1987, ssw1996, w1991, w1995, w2000, z1978, zl1989, zz1992} and the references cited therein for an incomplete list of publications. An extensive bibliography on superconvergence was given in \cite{kn1998}, and many references for $3D$ problems can be found in \cite{k2005}.

In this paper, we are concerned with new developments of superconvergence for the classical piecewise linear Ritz-Galerkin finite element solutions of the second order elliptic equations. The model problem seeks an unknown function $u$ satisfying
\begin{equation}\label{a}
\begin{split}
-\nabla \cdot(A\nabla u)=& f,\qquad \text{in}\ \Omega, \\
u= &g, \qquad \text{on}\ \partial\Omega,
\end{split}
\end{equation}
where $\Omega \subset {\mathbb R}^2$ is an open bounded domain with Lipschitz continuous boundary $\partial \Omega$, and the coefficient tensor $A \in {\mathbb R}^{2 \times 2} $ is a symmetric, positive definite and constant matrix. The usual weak form for the model problem (\ref{a}) seeks $u \in H^1(\Omega)$ such that $u =g$ on $\partial\Omega$ and satisfying
\begin{equation}\label{weak}
(A\nabla u, \nabla v)=(f,v), \qquad \forall v\in H_0^1(\Omega).
\end{equation}
where $H^1(\Omega)$ is the Sobolev space on $\Omega$ consisting of $L^2$-functions with square-integrable first order partial derivatives, $H_0^1(\Omega)=\{v\in H^1(\Omega): v|_{\partial \Omega}=0\}$ is a closed subspace of $H^1(\Omega)$, and $(\cdot,\cdot)$ denotes the standard $L^2$ inner product in $L^2(\Omega)$.

The Ritz-Galerkin finite element method for (\ref{a}) is based on the weak formulation (\ref{weak}) by restricting the continuous Sobolev spaces into their subspaces consisting of $C^0$-piecewise polynomial finite element functions. In the classical theory for the Ritz-Galerkin finite element method, the optimal order of error estimate in $H^1$ for the finite element solution is bounded by $O(h)$ when linear elements are employed. This error estimate was well-known to be sharp. Any convergence with an order higher than $O(h)$ in the $H^1$ norm would be considered as superconvergence. The goal of this paper is to derive an $O(h^4)$-superconvergence error estimate for the finite element solution and the usual nodal point interpolation of the exact solution in $H^1$ and $L^{\infty}$ norms. This result shall be established for uniform finite element partitions consisting of a particular set of triangles known as $A$-equilateral triangles. Briefly speaking, a triangle $T$ is said to be $A$-equilateral if $(A\nabla \phi_i,\nabla\phi_i)_T=const$ for every shape function $\phi_i$ of the triangle $T$. For the identity matrix $A=I$, a triangle is $A$-equilateral if and only if it is equilateral in the conventional sense.

We follow the usual notation for Sobolev spaces and norms \cite{ciarlet-fem, Gilbarg-Trudinger, grisvard, gr}. For any open bounded domain $D\subset \mathbb{R}^d$ with
Lipschitz continuous boundary, we use $\|\cdot\|_{s,D}$ and
$|\cdot|_{s,D}$ to denote the norm and seminorms in the Sobolev
space $H^s(D)$ for any $s\ge 0$, respectively. The inner product
in $H^s(D)$ is denoted by $(\cdot,\cdot)_{s,D}$. The space
$H^0(D)$ coincides with $L^2(D)$, for which the norm and the inner
product are denoted by $\|\cdot \|_{D}$ and $(\cdot,\cdot)_{D}$,
respectively. When $D=\Omega$, we shall drop the subscript $D$ in
the norm and inner product notation.

This paper is organized as follows. In Section 2 we first review the classical Ritz-Galerkin finite element scheme for the model problem (\ref{a}) and then state the superconvergence result. Section 3 is devoted to a proof of the superconvergence error estimates in both the $H^1$ and $L^{\infty}$ norms. In Section 4, we shall discuss the invariance of $A$-equilateral triangles under translation and certain rotation and reflections; these properties are essential for the construction of finite element meshes on which  $O(h^4)$-superconvergence is possible. In Section 5, we report some numerical results to confirm the $O(h^4)$-superconvergence developed in Section 3. Finally in the Appendix section, we state and prove the well-known Euler-Maclaurin formula which plays an important role in the superconvergence analysis.

\section{The Ritz-Galerkin Finite Element Method and Superconvergence} In this section, we shall briefly review the classical Ritz-Galerkin finite element method for the second order elliptic problem (\ref{a}).

\begin{figure}[h] \vspace*{-1cm}\hspace*{-15pt}
\includegraphics[scale=0.6]{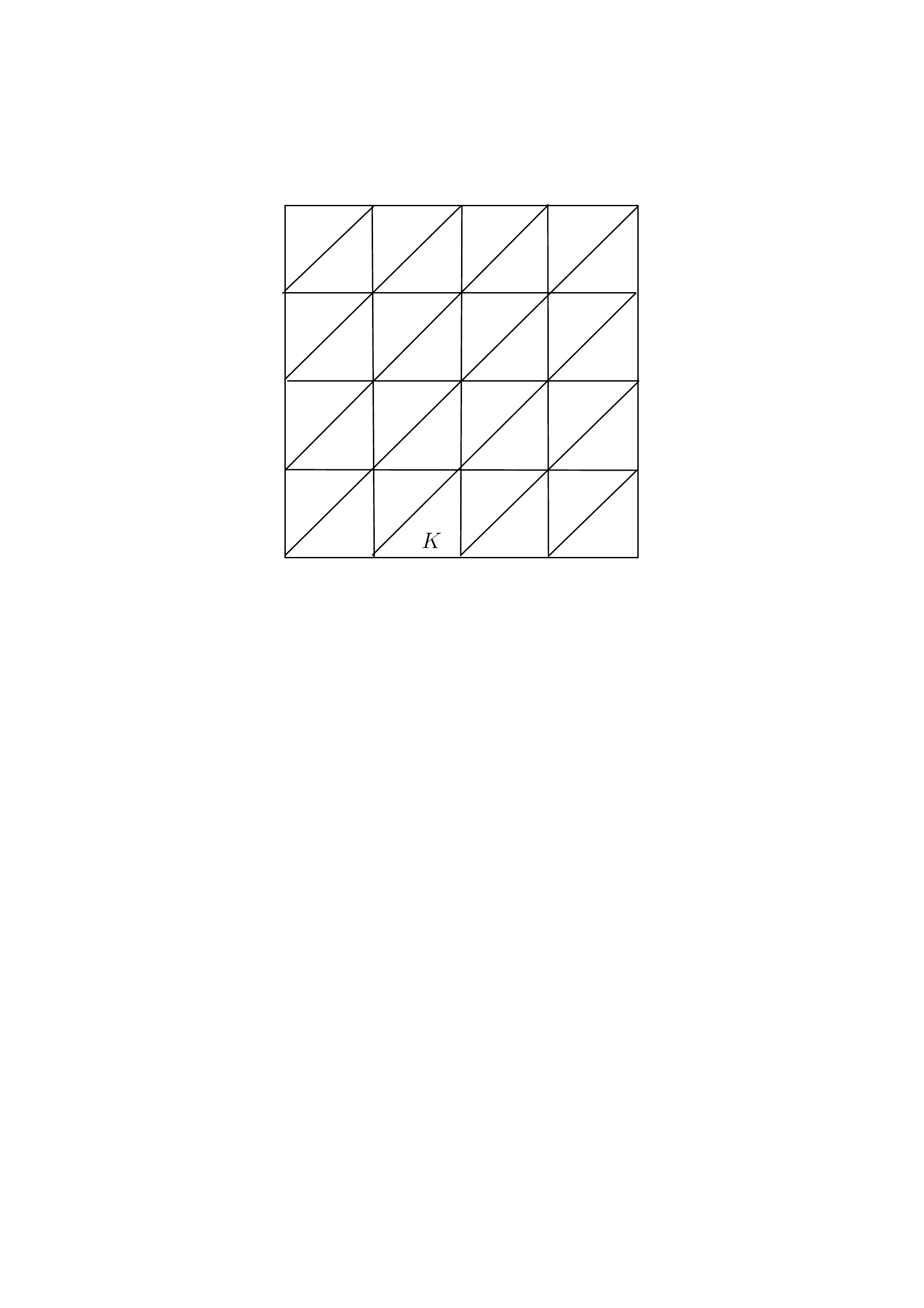}\vspace*{-10cm}
\caption{Finite Element Partition ${\cal T}_h$. }
\label{tri}
\end{figure}

Let ${\cal T}_h$ be a finite element partition of the $\Omega$ consisting of shape-regular triangles. Figure \ref{tri} illustrates a uniform finite element partition for a rectangular domain constructed as follows: First, the domain $\Omega$ is partitioned uniformly into $n\times n$ rectangles; Secondly, each rectangle is divided into two triangles by its diagonal line with a positive slope.

Denote by $S_h$ the finite element space consisting of $C^0$ piecewise linear functions; i.e.,
$$
S_h=\{ v: \ v \ \text{is continuous in}\ \Omega,\ v|_T\in P_1(T), T\in\T_h\},
$$
where $P_k(T)$ stands for the space of polynomial of total degree $k\ge 0$ or less. Denote by $S_h^0$ the subspace of $S_h$ consisting of finite element functions with vanishing boundary value; i.e.,
$$
S_h^0=\{ v: \ v \in S_h, \ v|_{\partial\Omega}=0\}.
$$
For any function $w\in H^1(\Omega)\cap C^0(\Omega)$, denote by $w^I\in S_h$ the usual interpolation of $w$ by using the Lagrange nodal basis.

The following is the well-known Ritz-Galerkin finite element scheme for the second order elliptic problem (\ref{a}) based on the weak form (\ref{weak}): {\em
Find $u_h \in S_h$ such that $u_h =g_I$ on $\partial\Omega$ and satisfying
\begin{equation}\label{fem}
(A\nabla u_h, \nabla v)=(f,v), \qquad \forall v\in S_h^0.
\end{equation}
}

Note that $S_h^0\subset H_0^1(\Omega)$. Thus, it follows from (\ref{fem}) and (\ref{weak}) that we have the following orthogonality
\begin{equation}\label{error-equation}
(A(u-\nabla u_h), \nabla v)=0, \qquad \forall v\in S_h^0.
\end{equation}
The above equation is also known as the error equation.

\medskip

For each triangular element $T\in \T_h$, denote by $\phi_{i,T}(i=1,2,3)$ the usual shape functions with value $1$ at one of the three vertices and $0$ at the other two. An element $T$ is said to be $A-$equilateral if there exists a constant $\alpha_T$ such that
\begin{equation}\label{A-equilateral}
(A\nabla\phi_{i,T},\nabla\phi_{i,T})_T=\alpha_T,\qquad i=1,2,3.
\end{equation}
The finite element partition $\T_h$ is said to be uniformly $A-$equilateral if there is a constant $\alpha$ such that
\begin{equation}\label{uniformly-A-equilateral}
(A\nabla\phi_{i,T},\nabla\phi_{i,T})_T=\alpha,\qquad i=1,2,3, \ T\in \T_h.
\end{equation}

The finite element partition $\T_h$ is said to be uniform if any two adjacent triangles that share a common edge form a parallelogram. The rest of this paper will assume that $\T_h$ is uniform and the coefficient tensor $A$ is a constant matrix.

The following is the main result of this paper regarding superconvergence for the finite element solution of the model problem (\ref{a}).

\begin{theorem} \label{thm1-new} Let $u_h$ be the Ritz-Galerkin finite element approximation arising from (\ref{fem}) and $u_I$ be the nodal point interpolation of the exact solution of (\ref{weak}). Assume that the finite element partition $\T_h$ is uniform and that the elements in $\T_h$ are uniformly $A-$equilateral. If the exact solution is sufficient smooth such that $u\in H^5(\Omega)$, then there exists a constant $C$ satisfying
\begin{equation}\label{h1norm-new}
\|\nabla(u_h - u_I)\|_0 \leq C h^4 \|u\|_{5}.
\end{equation}
Moreover, one has
\begin{equation}\label{maxnorm-new}
\|u_h - u_I\|_{\infty}\leq C h^4 |\log(h)|^{\frac12}\|u\|_5.
\end{equation}
\end{theorem}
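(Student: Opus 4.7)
The plan is to first reduce the $H^1$ estimate (\ref{h1norm-new}) to a superconvergence bound on the consistency form $E(u,v) := (A\nabla(u - u_I), \nabla v)$ tested against $v \in S_h^0$, then prove that bound by element-wise integration by parts combined with an Euler--Maclaurin argument that exploits the $A$-equilateral symmetry, and finally deduce the $L^\infty$ estimate (\ref{maxnorm-new}) through a discrete Green's function.

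Setting $w := u_h - u_I \in S_h^0$ and using the error equation (\ref{error-equation}), one has $(A\nabla w, \nabla w) = E(u, w)$, so by positive-definiteness of $A$, $\|\nabla w\|_0^2 \le C|E(u,w)|$. Thus (\ref{h1norm-new}) reduces to proving
\begin{equation*}
|E(u,v)| \le C h^4 \|u\|_5 \|\nabla v\|_0, \qquad \forall v \in S_h^0.
\end{equation*}
Since $\nabla v|_T$ is a constant vector on each triangle and $A$ is constant, element-wise integration by parts yields
\begin{equation*}
E(u,v) = \sum_{T \in \T_h} (\nabla v|_T) \cdot A \int_{\partial T}(u - u_I)\, \bn_T\, ds,
\end{equation*}
which, after regrouping interior contributions by edges (and postponing the easy boundary terms), becomes $\sum_e J_e \int_e (u - u_I)\, ds$, where $J_e$ denotes the jump of $A\nabla v \cdot \bn_e$ across the interior edge $e$.

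On each such edge, a Taylor expansion of $u$ around the midpoint gives $\int_e (u - u_I)\, ds = c\, h_e^3\, \partial_\tau^2 u(m_e) + O(h_e^5)$, a leading $O(h^3)$ contribution in the tangential second derivative. The uniform parallelogram structure of $\T_h$ organizes edges into three parallel families. Using (\ref{uniformly-A-equilateral}) together with $\sum_i \nabla\phi_{i,T} = 0$ and the elementary geometric fact that three equal-length vectors summing to zero in $\mathbb{R}^2$ are pairwise at $120^\circ$, one obtains the polarized identity $(A\nabla\phi_{i,T}, \nabla\phi_{j,T})_T = -\alpha/2$ for $i \ne j$, which forces the three edge-family jump coefficients $J_e$ to be symmetrically related in a way that mimics the Laplacian on an equilateral mesh. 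I would then apply the Euler--Maclaurin formula (Appendix) along each parallel edge family: uniformity causes the interior correction terms to telescope, while the $A$-equilateral symmetry cancels the $O(h^3)$ pieces across the three families. What survives is bounded by $C h^4 \|u\|_5 \|\nabla v\|_0$. I expect this cancellation-and-telescoping step to be the main obstacle; without the $A$-equilateral assumption only the classical $O(h^2)$-type superconvergence would survive, and without uniformity the Euler--Maclaurin remainders would not telescope.

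For (\ref{maxnorm-new}), since $w$ is piecewise linear we have $\|w\|_\infty = \max_z |w(z)|$ over vertices $z$. Define the discrete Green's function $G_h^z \in S_h^0$ by $(A\nabla G_h^z, \nabla \psi) = \psi(z)$ for all $\psi \in S_h^0$; testing with $\psi = w$ and invoking (\ref{error-equation}) gives $w(z) = E(u, G_h^z)$. The consistency bound from the preceding step then yields $|w(z)| \le C h^4 \|u\|_5 \|\nabla G_h^z\|_0$, and the classical two-dimensional estimate $\|\nabla G_h^z\|_0 \le C |\log h|^{1/2}$ delivers (\ref{maxnorm-new}).
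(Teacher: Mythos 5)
Your overall architecture matches the paper's: reduce via the error equation to bounding the consistency functional $(A\nabla(u-u_I),\nabla v)$ for $v\in S_h^0$, integrate by parts element by element, apply the Euler--Maclaurin formula on each edge, cancel the leading corrections using uniformity and the $A$-equilateral hypothesis, and finish the $L^\infty$ bound with a logarithmic factor (the paper invokes the discrete Sobolev inequality $\|w\|_\infty\le C|\log h|^{1/2}\|\nabla w\|_0$ for $w\in S_h^0$ directly, while you route through a discrete Green's function with $\|\nabla G_h^z\|_0\le C|\log h|^{1/2}$; these are equivalent here and your last step is fine). The difficulty is that the one step you explicitly defer --- the exact cancellation of the leading Euler--Maclaurin corrections --- is the entire content of the theorem, and the mechanism you propose for it (the polarization identity $(A\nabla\phi_{i,T},\nabla\phi_{j,T})_T=-\alpha/2$ together with ``telescoping along parallel edge families'') is not the one that closes the argument. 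The jump coefficients $J_e$ vary from edge to edge, so the composite-quadrature corrections do not telescope in the classical sense; and even if they did along each mesh line, the endpoint residues of size $O(h^3)$ on $O(h^{-1})$ lines would leave an $O(h^2)$ remainder, which is exactly the order you must eliminate.

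The cancellation that actually works is the following. On each edge one writes $A\nabla v\cdot\bn_3$ in terms of the tangential derivatives ${\cal D}_{12}v$ and ${\cal D}_{31}v$ with coefficients proportional to $A\nabla\phi_i\cdot\nabla\phi_i$ (the diagonal quantities, not the off-diagonal products); the ${\cal D}_{12}v$ contribution cancels against the neighboring element across $\ell_{12}$ by the parallelogram property. The leading Euler--Maclaurin term, proportional to $\ell_{12}^2\int_{\ell_{12}}{\cal D}_{12}^2u\,{\cal D}_{31}v\,ds$, is then converted by the identity (\ref{2.9-1-new}) into an area integral over $T$ plus an edge integral over $\ell_{31}$ that again cancels with the neighbor sharing $\ell_{31}$. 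What survives on each element are three area integrals, one per edge family, which after a further integration by parts all carry the common third-order operator ${\cal D}_{12}{\cal D}_{23}{\cal D}_{31}$ composed with $\alpha(\ell_{12}{\cal D}_{12}+\ell_{23}{\cal D}_{23}+\ell_{31}{\cal D}_{31})$, the single constant $\alpha$ appearing precisely because the partition is uniformly $A$-equilateral; see (\ref{2.13}). This sum vanishes identically because the scaled tangent vectors of a triangle close up, $\ell_{12}\mathbf{t}_{12}+\ell_{23}\mathbf{t}_{23}+\ell_{31}\mathbf{t}_{31}=0$. It is this closed-loop identity, combined with the equality of the three coefficients, that kills the $O(h^2)$ terms; without identifying it your sketch does not get past the classical order. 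The surviving Euler--Maclaurin remainders involve fourth and fifth tangential derivatives of $u$ against $\nabla v$ with an $O(h^4)$ prefactor, which yields (\ref{h1norm-new}) upon taking $v=u_h-u_I$, and then (\ref{maxnorm-new}) as you describe.
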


The error estimate (\ref{maxnorm-new}) shows that the Ritz-Galerkin finite element solution is super-convergent to the exact solution at the rate of ${\cal O}(h^4|\log(h)|^{\frac12})$ when measured at the set of vertices in the maximum norm. Likewise, the error estimate (\ref{h1norm-new}) implies a superconvergence of order $4$ in a discrete $H^1$ norm for the finite element approximation $u_h$.

\section{A Proof for Theorem \ref{thm1-new}}

In this section we shall provide a proof for the superconvergence given by Theorem \ref{thm1-new}. To this end, for any triangle $T\in {\cal T}_h$ with vertices $V_1$, $V_2$ and $V_3$ ordered in the counterclockwise direction, denote by $|T|$ the area of the triangle $T$ and $\textbf{n}$ the unit outward normal direction to $\partial T$. Let $\varphi_j$ the shape function associated with the nodal point $V_j$ which assumes the value $1$ at the vertex point $V_j$ and $0$ at all other two vertices $V_k$ for $k\neq j (k, j=1,2,3)$. Denote by $\ell_{ij}$ the edge $V_iV_j$ as well as its length
($i,j=1,2,3$). Denote by $\textbf{n}_1$, $\textbf{n}_2$ and $\textbf{n}_3$ the unit outward normal directions to the edges $\ell_{23}$, $\ell_{31}$ and $\ell_{12}$, respectively, see Figure \ref{fig:triangle} for an illustration. Denote by $\textbf{t}_{12}$, $\textbf{t}_{23}$ and $\textbf{t}_{31}$ the unit tangential directions along the directions $\overrightarrow{V_1V_2}$, $\overrightarrow{V_2V_3}$ and $\overrightarrow{V_3V_1}$, respectively. For convenience, we use ${\cal D}_{12}u$, ${\cal D}_{23}u$ and ${\cal D}_{31}u$ to denote the partial derivatives along the directions $\mathbf{t}_{12}$, $\mathbf{t}_{23}$ and $\mathbf{t}_{31}$, respectively.

\begin{figure}
\begin{center}
\begin{tikzpicture}[rotate=233]
    \path (0,0) coordinate (A3);
    \path (3,0) coordinate (A1);
    \path (3.4,-0.2) coordinate (A11);
    \path  (1.92, 1.44) coordinate (M);
    \path (0,4) coordinate (A2);
    \path (-0.2,4.4) coordinate (A21);
    \path (1.0,1.0) coordinate (center);
    \path (1.5,0) coordinate (A1half);
    \path (0,2) coordinate (A2half);
    \path (1.5,2) coordinate (A3half);
    \path (2.0,0.5) coordinate (a);
    \path (A1half) ++(0,-0.6) coordinate (A1To);
    \path (A2half) ++(-0.6,0) coordinate (A2To);
    \path (A3half) ++(38:0.6cm) coordinate (A3To);
    \draw (A3) -- (A1) -- (A2) -- (A3);
    \draw [dashed] (A3) -- (M);
    \draw node[above] at (A3)(2.625, 0.5) {$V_3$};
    \draw node[below] at (A1) {$V_1$};
    \draw node[below] at (M) {$M$};
    \draw node[below] at (A2) {$V_2$};
    \draw node[right] at (A1half)(2.625, 0.5) {$\ell_{31}$};
    \draw node[left] at (A2half) {$\ell_{23}$};
    \draw node[above] at (A3half) {$\ell_{12}$};

    \draw[->,thick] (A1half) -- (A1To) node[above]{$\mathbf{n}_2$};
    \draw[->,thick] (A2half) -- (A2To) node[right]{$\mathbf{n}_1$};
    \draw[->,thick] (A3half) -- (A3To) node[right]{$\mathbf{n}_3$};
\end{tikzpicture}
\end{center}
\caption{An illustrative triangular element}
\label{fig:triangle}
\end{figure}
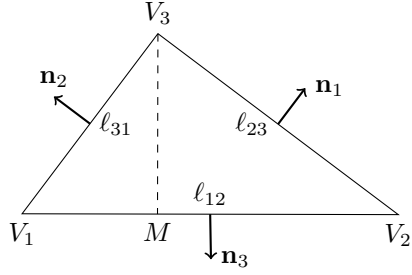

Let $\phi_{i,T}$ be the shape function corresponding to the vertex $V_i$. As functions increase the most rapidly along their gradient directions, we then have
\begin{equation}\label{bn-i}
\textbf{n}_i=-\nabla \phi_{i,T}/\|\nabla \phi_{i,T}\|,\qquad i=1,2,3,
\end{equation}
where $\|\nabla \phi_{i,T}\|$ is the Euclidean norm of the vector $\nabla \phi_{i,T}$. We claim that
\begin{equation}\label{3-new}
\|\nabla \phi_{3,T}\|^{-1}=2|T|/\ell_{12}.
\end{equation}
In fact, denote by $M\in \ell_{12}$ the endpoint of the perpendicular line segment passing through $V_3$ (see Figure \ref{fig:triangle}). Let $H$ be the length of the segment $MV_3$. From the definition of the shape function $\phi_{3,T}$ we have
$$
1= \phi_{3,T}(V_3)-\phi_{3,T}(M) = \overrightarrow{MV_3} \cdot \nabla  \phi_{3,T}= H\|\nabla \phi_{3,T}\|,
$$
which, together with $\ell_{12} H =2|T|$, gives rise to (\ref{3-new}). The relation (\ref{3-new}) can be extended to other two edges to give
\begin{equation}\label{3-new:002}
\|\nabla \phi_{1,T}\|^{-1}=2|T|/\ell_{23},\quad \|\nabla \phi_{2,T}\|^{-1}=2|T|/\ell_{31}.
\end{equation}
Substituting (\ref{3-new}) and (\ref{3-new:002}) into (\ref{bn-i}) yields the following result.

\begin{lemma} \label{Lemma:000} For any $T\in\T_h$ with vertices $V_i$ ordered in the counterclockwise direction, the following identities hold true:
\begin{equation}\label{bn:001}
\textbf{n}_1=-2|T|\nabla \phi_{1,T}/\ell_{23},\ \ \textbf{n}_2=-2|T|\nabla \phi_{2,T}/\ell_{31},\ \textbf{n}_3=-2|T|\nabla \phi_{3,T}/\ell_{12}.
\end{equation}
\end{lemma}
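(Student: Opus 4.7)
The statement is an immediate consequence of the two preparatory observations already recorded in the excerpt, so the plan is to assemble them cleanly rather than to prove anything new.

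First I would start from identity \eqref{bn-i}, which expresses each outward unit normal $\mathbf{n}_i$ as the negative of the normalized gradient of the opposite shape function. The justification is geometric: $\phi_{i,T}$ is linear, vanishes on the edge opposite to $V_i$, and equals $1$ at $V_i$; hence $\nabla\phi_{i,T}$ is constant, is orthogonal to the zero level set (the opposite edge), and points toward $V_i$, i.e.\ into $T$. Consequently the unit outward normal along the opposite edge is precisely $-\nabla\phi_{i,T}/\|\nabla\phi_{i,T}\|$. This step uses the counterclockwise ordering of the vertices only implicitly, through the identification of ``outward.''

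Next I would invoke the magnitude formulas \eqref{3-new} and \eqref{3-new:002}, which the excerpt has already derived by a one-line argument: the height $H$ from $V_i$ to the opposite edge satisfies $H\|\nabla\phi_{i,T}\|=1$ (from evaluating $\phi_{i,T}$ at the foot of the perpendicular and at $V_i$), and combining with the area relation $|T|=\tfrac12(\text{base})\cdot H$ for the three choices of base gives
\begin{equation*}
\|\nabla\phi_{1,T}\|^{-1}=\frac{2|T|}{\ell_{23}},\qquad \|\nabla\phi_{2,T}\|^{-1}=\frac{2|T|}{\ell_{31}},\qquad \|\nabla\phi_{3,T}\|^{-1}=\frac{2|T|}{\ell_{12}}.
\end{equation*}

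Finally, substituting these three identities into \eqref{bn-i} for $i=1,2,3$ produces the three formulas in \eqref{bn:001} directly. There is no real obstacle here; the only point requiring a sentence of justification is the sign/orientation in step one, namely that $\nabla\phi_{i,T}$ points inward at $V_i$ so that the negation in \eqref{bn-i} is the correct one for the outward normal to the opposite edge. Everything else is bookkeeping.
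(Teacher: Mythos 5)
Your proposal is correct and follows essentially the same route as the paper: establish the direction via $\mathbf{n}_i=-\nabla\phi_{i,T}/\|\nabla\phi_{i,T}\|$, compute $\|\nabla\phi_{i,T}\|^{-1}=2|T|/(\text{opposite edge length})$ from the height relation $H\|\nabla\phi_{i,T}\|=1$ and the area formula, and substitute. Your added sentence justifying the inward orientation of $\nabla\phi_{i,T}$ (vanishing on the opposite edge, equal to $1$ at $V_i$) is a slightly more explicit version of the paper's ``functions increase most rapidly along their gradient'' remark, but the argument is the same.
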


The following lemma provides three identities which are very useful in the superconvergence analysis.

\begin{lemma} \label{Lemma:001} For any $T\in\T_h$ and $w\in H^1(T)$ there hold the following identities:
\begin{equation}\label{2.9-1-new}
\int_{\ell_{12}} w ds=-\frac{\ell_{23}\ell_{12}}{2|T|}\int_T{\cal D}_{23}
w dT+\frac{\ell_{12}}{\ell_{31}}\int_{\ell_{31}}w ds.
\end{equation}
\begin{equation}\label{2.9-2-new}
\int_{\ell_{23}} w ds=-\frac{\ell_{31}\ell_{23}}{2|T|}\int_T{\cal D}_{31}
w dT+\frac{\ell_{23}}{\ell_{12}}\int_{\ell_{12}}w ds.
\end{equation}
\begin{equation}\label{2.9-3-new}
\int_{\ell_{31}} w ds=-\frac{\ell_{12}\ell_{31}}{2|T|}\int_T{\cal D}_{12}
w dT+\frac{\ell_{31}}{\ell_{23}}\int_{\ell_{23}}w ds.
\end{equation}
\end{lemma}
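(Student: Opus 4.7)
I only need to prove the first identity \eqref{2.9-1-new}; the other two follow by cyclic permutation of the vertex labels $1\mapsto 2\mapsto 3\mapsto 1$ and the corresponding edges/normals/tangents. My strategy is a direct application of the divergence theorem with the constant vector field $\mathbf{t}_{23}$, combined with Lemma \ref{Lemma:000} to evaluate the boundary contributions.

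The first step is to observe that since $\mathbf{t}_{23}$ is a constant unit vector, integration by parts gives
\begin{equation*}
\int_T {\cal D}_{23} w\, dT \;=\; \int_T \nabla w \cdot \mathbf{t}_{23}\, dT \;=\; \int_{\partial T} w\,(\mathbf{t}_{23}\cdot \mathbf{n})\, ds.
\end{equation*}
On the edge $\ell_{23}$ the outward normal is $\mathbf{n}_1$, which is orthogonal to $\mathbf{t}_{23}$, so that contribution vanishes. The boundary integral therefore reduces to the contributions from $\ell_{12}$ and $\ell_{31}$, whose outward normals are $\mathbf{n}_3$ and $\mathbf{n}_2$, respectively.

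The second step is to compute the two remaining inner products using Lemma \ref{Lemma:000}. From $\mathbf{n}_3 = -2|T|\nabla\phi_{3,T}/\ell_{12}$ and the fact that $\phi_{3,T}$ is affine with values $0$ at $V_2$ and $1$ at $V_3$, the directional derivative $\mathbf{t}_{23}\cdot \nabla\phi_{3,T}$ equals $1/\ell_{23}$, so
\begin{equation*}
\mathbf{t}_{23}\cdot \mathbf{n}_3 \;=\; -\frac{2|T|}{\ell_{12}\,\ell_{23}}.
\end{equation*}
Similarly, $\phi_{2,T}$ decreases from $1$ at $V_2$ to $0$ at $V_3$ along $\mathbf{t}_{23}$, which yields $\mathbf{t}_{23}\cdot \nabla\phi_{2,T} = -1/\ell_{23}$ and hence
\begin{equation*}
\mathbf{t}_{23}\cdot \mathbf{n}_2 \;=\; \frac{2|T|}{\ell_{31}\,\ell_{23}}.
\end{equation*}

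The third step is to substitute these two scalars back into the boundary integral and multiply by $-\ell_{23}\ell_{12}/(2|T|)$ to solve for $\int_{\ell_{12}} w\, ds$; this immediately produces \eqref{2.9-1-new}. The identities \eqref{2.9-2-new} and \eqref{2.9-3-new} are obtained from exactly the same argument applied with the constant vector fields $\mathbf{t}_{31}$ and $\mathbf{t}_{12}$ respectively, using the corresponding two shape functions in Lemma \ref{Lemma:000}. There is no real obstacle here beyond bookkeeping of the cyclic indices and signs; the only subtlety to be careful about is the counterclockwise ordering of the vertices, which is what fixes the sign of each $\mathbf{n}_i$ in Lemma \ref{Lemma:000} and hence the sign of the volume term in each identity.
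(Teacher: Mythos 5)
Your proof is correct and follows essentially the same route as the paper: integration by parts against the constant field $\mathbf{t}_{23}$, the observation that the $\ell_{23}$ contribution vanishes since $\mathbf{n}_1\perp\mathbf{t}_{23}$, and the evaluation of $\mathbf{t}_{23}\cdot\mathbf{n}_3$ and $\mathbf{t}_{23}\cdot\mathbf{n}_2$ via Lemma \ref{Lemma:000} together with $\mathbf{t}_{23}\cdot\nabla\phi_{3,T}=1/\ell_{23}$ and $\mathbf{t}_{23}\cdot\nabla\phi_{2,T}=-1/\ell_{23}$. All signs and constants check out, so nothing further is needed.
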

\begin{proof}
To prove (\ref{2.9-1-new}), from the usual integration by parts we have
\begin{equation}\label{2.6-1-new}
\begin{split}
 \int_T{\cal D}_{23} w dT
= \int_{\ell_{12}} w \textbf{n}_3\cdot\textbf{t}_{23}ds+\int_{\ell_{31}} w
\textbf{n}_2\cdot\textbf{t}_{23}ds.
\end{split}
\end{equation}
From the definition of the shape function $\phi_{3,T}$, it is not hard to see that
$$
\nabla\phi_{3,T}\cdot \bft_{23} = \ell_{23}^{-1}.
$$
Hence, we have from Lemma \ref{Lemma:000}
$$
\bft_{23}\cdot \bfn_3 = - 2|T| \nabla\phi_{3,T}\cdot \bft_{23} /\ell_{12}  = -\frac{2|T|}{\ell_{12}\ell_{23}}.
$$
Analogously, we have
$$
\bft_{23}\cdot \bfn_2 = - 2|T| \nabla\phi_{2,T}\cdot \bft_{23} /\ell_{31}  = \frac{2|T|}{\ell_{31}\ell_{23}}.
$$
Combining the last two identities with (\ref{2.6-1-new}) gives
\begin{equation*}
\begin{split}
\int_T {\cal D}_{23} w dT
=-\frac{2|T|}{\ell_{23}\ell_{12}}\int_{\ell_{12}} w ds + \frac{2|T|}{\ell_{23}\ell_{31}}\int_{\ell_{31}}w ds.
\end{split}
\end{equation*}
This completes the proof of (\ref{2.9-1-new}). The other two identities (\ref{2.9-2-new}) and (\ref{2.9-3-new}) can be proved in a similar fashion, and the details are omitted.
\end{proof}

Denote by $e_h=u_h-u_I$ the error between the Ritz-Galerkin finite element approximation $u_h$ and the nodal point interpolation $u_I$ of the exact solution $u$. From the error equation (\ref{error-equation}) we have
\begin{equation}\label{2.0}
(A\nabla e_h,\nabla v) = (A\nabla(u-u_I),\nabla v),\qquad \forall v\in S_h^0.
\end{equation}
Using the divergence theorem and the fact that $\nabla v$ is a constant vector on each element we obtain
\begin{equation}\label{2.1}
\begin{split}
&(A\nabla e_h,\nabla v)\\
=&\sum_{T\in {\cal T}_h}\int_T A\nabla(u-u_I)\cdot \nabla v dT\\
=&\sum_{T\in {\cal T}_h}\int_{\partial T}(u-u_I)(A\nabla v\cdot \textbf{n})ds\\
=&\sum_{T\in {\cal T}_h}\int_{\ell_{12}}(u-u_I)(A\nabla v\cdot \textbf{n}_3) ds
+\sum_{T\in {\cal T}_h}\int_{\ell_{23}}(u-u_I)(A\nabla v\cdot \textbf{n}_1) ds\\
& +
\sum_{T\in {\cal T}_h}\int_{\ell_{31}}(u-u_I)(A\nabla v\cdot \textbf{n}_2) ds\\
=& I_1 + I_2 + I_3,
\end{split}
\end{equation}
where $I_j$ are defined accordingly.

For simplicity of analysis, we shall focus on the treatment of the first term
\begin{equation}\label{I-one}
I_1=\sum_{T\in {\cal T}_h}\int_{\ell_{12}}(u-u_I)(A\nabla v\cdot \textbf{n}_3) ds
\end{equation}
in the forthcoming mathematical derivation; the other two terms can be handled by using the same method with minor and straightforward modifications.

Note that $v\in S_{h}^0$ can be represented by using the shape functions $\phi_{i,T}$ on each element $T\in \T_h$. For simplicity of notation, we shall drop the subscript $T$ from the notation of the basis functions. Thus, on the element $T$ we have
$$
v=v(V_1)\phi_1+v(V_2)\phi_2+v(V_3)\phi_3.
$$
It follows that
\begin{equation*}\label{1}
\nabla v=v(V_1)\nabla \phi_1+v(V_2)\nabla\phi_2+v(V_3)\nabla\phi_3,
\end{equation*}
which, together with $\nabla \phi_1+\nabla \phi_2+\nabla \phi_3=0$, gives
\begin{equation*}\label{aa}
\begin{split}
\nabla v&=(v(V_2)-v(V_1))\nabla \phi_2+(v(V_3)-v(V_1))\nabla\phi_3\\
&=\ell_{12}{\cal D}_{12}v\nabla \phi_2-\ell_{31}{\cal D}_{31}v\nabla \phi_3.
\end{split}
\end{equation*}
Substituting the above into (\ref{I-one}) yields
\begin{equation} \label{2.3-1}
\begin{split}
I_1=&\sum_{T\in {\cal T}_h} \int_{\ell_{12}}(u-u_I)(A\nabla v\cdot \textbf{n}_3)ds\\
=&\sum_{T\in {\cal T}_h}  \ell_{12}A \nabla\phi_2\cdot \bn_3 \int_{\ell_{12}}(u-u_I){\cal D}_{12}v ds \\& - \sum_{T\in {\cal T}_h} \ell_{31}A \nabla\phi_3 \cdot\bn_3 \int_{\ell_{12}}(u-u_I){\cal D}_{31}v ds\\
=&-\sum_{T\in {\cal T}_h}\ell_{31}A\nabla\phi_3 \cdot\bn_3 \int_{\ell_{12}}(u-u_I){\cal D}_{31}v ds.
\end{split}
\end{equation}
where we have used the fact that $\sum_{T\in {\cal T}_h}\ell_{12}A \nabla\phi_2\cdot \bn_3 \int_{\ell_{12}}(u-u_I){\cal D}_{12}v ds=0$ on the last line due to a similar contribution from its adjacent element which shares the same edge $\ell_{12}$ and hence forms a parallelogram with $T$, plus ${\cal D}_{12}v$ is continuous across $\ell_{12}$ and $\bn_3|_{T}=-\bn_3|_{T'}$; cf. Figure \ref{fig3}.

\begin{figure}[H] \vspace*{-2cm}\hspace*{-15pt}
\includegraphics[scale=0.5]{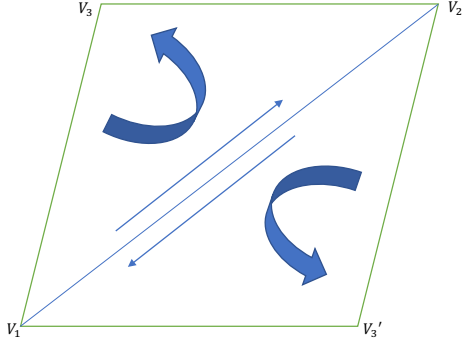}\vspace*{-7cm}
\caption{A parallelogram formed by two adjacent elements $T=\Delta V_1V_2V_3$ and $T'=\Delta V_1V_{3'}V_2$.}
 \label{fig3}
\end{figure}

Now substituting $\textbf{n}_3=-2|T|\nabla\phi_3/ \ell_{12}$ into (\ref{2.3-1}) gives
\begin{equation}\label{2.3}
I_1 = \sum_{T\in {\cal T}_h} \frac{2\ell_{31}|T|A \nabla\phi_3\cdot\nabla\phi_3}{\ell_{12}}\int_{\ell_{12}}(u-u_I){\cal D}_{31}v ds.
\end{equation}
Furthermore, we apply the Euler-Maclaurin formula (\ref{ML}) to the line integral $\int_{\ell_{12}}(u-u_I)ds$ to obtain
\begin{equation}\label{2.4}
\begin{split}
I_1=&\sum_{T\in {\cal T}_h}\int_{\ell_{12}}(u-u_I)(A\nabla v\cdot \textbf{n}_3)ds\\
=&\sum_{T\in {\cal T}_h}\frac{2 \ell_{31} |T|A\nabla \phi_3\cdot\nabla \phi_3 }{\ell_{12}}\int_{\ell_{12}}(u-u_I){\cal D}_{31}vds\\
=&\sum_{T\in {\cal T}_h}\frac{2 \ell_{31} |T|A\nabla \phi_3\cdot\nabla \phi_3 }{\ell_{12}}\Big( -\frac{ \ell_{12}^2}{12} \int_{\ell_{12}} {\cal D}_{12}^2 u {\cal D}_{31}v ds \\
&\quad +\left(\frac{\ell_{12}}{2}\right)^4\int_{\ell_{12}}{\cal G} {\cal D}_{12}^4 u {\cal D}_{31}v ds\Big).\\
\end{split}
\end{equation}
Using (\ref{2.9-1-new}), the line integral $\int_{\ell_{12}} {\cal D}_{12}^2 u {\cal D}_{31}v ds$ can be expressed as
\begin{equation}\label{2.4:100}
\int_{\ell_{12}} {\cal D}_{12}^2 u {\cal D}_{31}v ds = - \frac{\ell_{23}\ell_{12}}{2|T|} \int_T {\cal D}_{23} {\cal D}_{12}^2 u {\cal D}_{31}v dT + \frac{\ell_{12}}{\ell_{31}} \int_{\ell_{31}} {\cal D}_{12}^2 u {\cal D}_{31}v ds.
\end{equation}
To deal with the second line integral $\int_{\ell_{12}} {\cal G} {\cal D}_{12}^4 u {\cal D}_{31}v ds$, we shall extend the weight function ${\cal G}$ from the line segment $\ell_{12}$ to the element $T$ by assigning a constant value along the direction of $\ell_{23}$. Denote by ${\cal G}_{23}$ this extension of the weight function ${\cal G}$. Figure \ref{figfig2} illustrates how this extension was done geometrically: on each line segment $AB$ that is parallel to the edge $\ell_{23}=V_2V_3$, one sets ${\cal G}_{23}|_{{AB}}={\cal G}|_{\ell_{31}}(A)={\cal G}|_{\ell_{12}}(B)$. As ${\cal G}_{23}$ has constant values along the direction $V_2V_3$, we then have $D_{23}{\cal G}_{23}=0$.

\begin{figure}[H]
 \vspace*{-0.4cm}
\hspace*{40pt}
\includegraphics[scale=0.6]{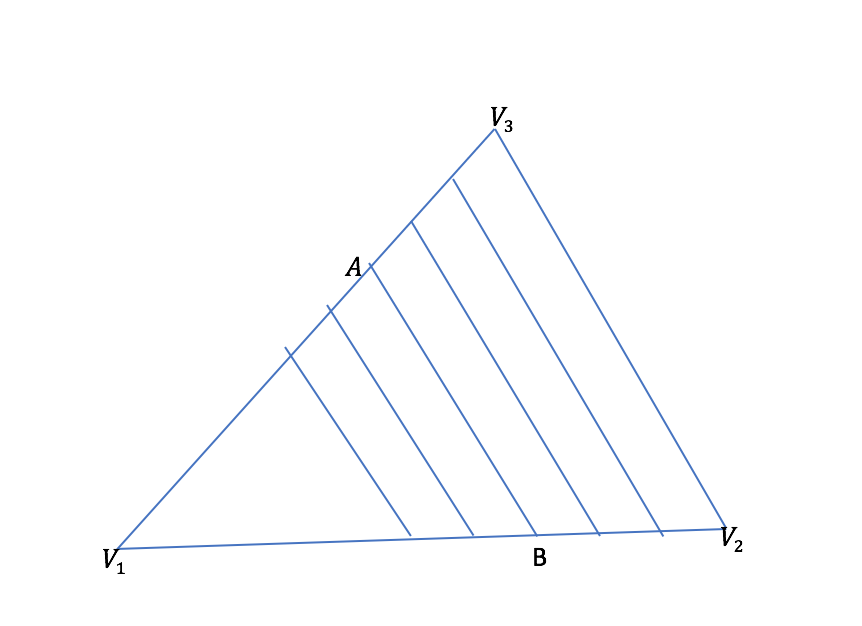}
\caption{Extension of weight function ${\cal G}$ to the element $T=\Delta V_1V_2V_3$. }
\label{figfig2}
\end{figure}

Using the function ${\cal G}_{23}$, we may rewrite the line integral $\int_{\ell_{12}} {\cal G} {\cal D}_{12}^4 u {\cal D}_{31}v ds$ as follows:

\begin{equation}\label{2.4:110}
\int_{\ell_{12}}{\cal G} {\cal D}_{12}^4 u {\cal D}_{31}v ds = - \frac{\ell_{23}\ell_{12}}{2|T|} \int_T {\cal G}_{23} {\cal D}_{23} {\cal D}_{12}^4 u {\cal D}_{31}v dT + \frac{\ell_{12}}{\ell_{31}}  \int_{\ell_{31}} {\cal G}_{23} {\cal D}_{12}^4 u {\cal D}_{31}v ds.
\end{equation}
Now substituting (\ref{2.4:100}) and (\ref{2.4:110}) into (\ref{2.4}) yields,
 \begin{equation}\label{2.10}
 \begin{split}
I_1=&\sum_{T\in {\cal T}_h} \int_{\ell_{12}}(u-u_I)(A\nabla v\cdot \textbf{n}_3)ds\\
=&\sum_{T\in {\cal T}_h} \frac{ \ell^2_{12} \ell_{23} \ell_{31}A\nabla \phi_3\cdot\nabla \phi_3 }{12}\int_T{\cal D}_{23}{\cal D}_{12}^2u{\cal D}_{31}v dT
\\&-\sum_{T\in {\cal T}_h} \frac{\ell^2_{12} |T|A\nabla \phi_3\cdot\nabla \phi_3 }{6}\int_{\ell_{31}}{\cal D}_{12}^2u{\cal D}_{31}v ds\\
&-\sum_{T\in {\cal T}_h}\frac{ \ell^4_{12} \ell_{23} \ell_{31}  A\nabla \phi_3\cdot\nabla \phi_3 }{16} \int_T {\cal G}_{23}{\cal D}_{23}{\cal D}_{12}^4u{\cal D}_{31}v dT\\
&+ \sum_{T\in {\cal T}_h}\frac{\ell_{12}^4|T|A\nabla \phi_3\cdot\nabla \phi_3}{8}\int_{\ell_{31}} {\cal G} {\cal D}_{12}^4u  {\cal D}_{31}v ds \\
=&\sum_{T\in {\cal T}_h} \frac{\ell^2_{12} \ell_{23} \ell_{31}A\nabla \phi_3\cdot\nabla \phi_3 }{12}\int_T{\cal D}_{23}{\cal D}_{12}^2u{\cal D}_{31}v dT\\
& -\sum_{T\in {\cal T}_h}\frac{\ell^4_{12} \ell_{23}  \ell_{31}A\nabla \phi_3\cdot\nabla \phi_3 }{16} \int_T {\cal G}_{23} {\cal D}_{23}{\cal D}_{12}^4u{\cal D}_{31}v dT,\\
\end{split}
\end{equation}
where we have used the following cancellation property
$$
\sum_{T\in {\cal T}_h} \frac{\ell^2_{12} |T| A\nabla \phi_3\cdot\nabla \phi_3  }{6}\int_{\ell_{31}}{\cal D}_{12}^2u{\cal D}_{31}v ds=0,
$$
and
$$
\sum_{T\in {\cal T}_h}\frac{\ell_{12}^4|T|A\nabla \phi_3\cdot\nabla \phi_3}{8}\int_{\ell_{31}} {\cal G} {\cal D}_{12}^4u {\cal D}_{31}v ds=0,
$$
due to a similar contribution from its adjacent
element that shares the same edge $\ell_{31}$ and makes a parallelogram, plus the fact that ${\cal D}_{31} v $ has the same value on this parallelogram.

Similarly, we can derive the following identities:
\begin{equation}\label{2.11}
\begin{split}
I_2=&\sum_{T\in {\cal T}_h}\int_{\ell_{23}}(u-u_I)(A\nabla v\cdot \textbf{n}_1)ds\\
=&\sum_{T\in {\cal T}_h} \frac{\ell^2_{23}\ell_{31}\ell_{12} A\nabla \phi_1\cdot\nabla \phi_1}{12}\int_{T}{\cal D}_{31}
{\cal D}_{23}^2u{\cal D}_{12}v dT\\
&-\sum_{T\in {\cal T}_h}\frac{  \ell^4_{23} \ell_{31}  \ell_{12}A\nabla \phi_1\cdot\nabla \phi_1 }{16} \int_T {\cal G}_{31} {\cal D}_{31}
{\cal D}_{23}^4u {\cal D}_{12}v dT,
\end{split}
\end{equation}
and
\begin{equation}\label{2.12}
\begin{split}
I_3=&\sum_{T\in {\cal T}_h}\int_{\ell_{31}}(u-u_I)(A\nabla v\cdot\textbf{n}_2)ds\\
=&\sum_{T\in {\cal T}_h}\frac{\ell^2_{31} \ell_{12}\ell_{23}A\nabla \phi_2\cdot\nabla \phi_2}{12}\int_{T}
{\cal D}_{12}{\cal D}_{31}^2u{\cal D}_{23}v dT\\
&-\sum_{T\in {\cal T}_h}\frac{  \ell^4_{31} \ell_{12}  \ell_{23}A\nabla \phi_2\cdot\nabla \phi_2}{16} \int_T {\cal G}_{12} {\cal D}_{12}
{\cal D}_{31}^4u {\cal D}_{23}v dT,
\end{split}
\end{equation}
where ${\cal G}_{31}$ and ${\cal G}_{12}$ are similar extensions satisfying $D_{31}{\cal G}_{31}=0$ and $D_{12}{\cal G}_{12}=0$.

Going back to (\ref{2.1}), by using (\ref{2.10}), (\ref{2.11}) and (\ref{2.12}) we arrive at
\begin{equation}\label{2.13}
\begin{split}
&(A\nabla(u-u_I),\nabla v) \\
=&\frac{\ell_{12}\ell_{23}\ell_{31}}{12}\sum_{T\in{\cal T}_h}\int_{T}(\ell_{12}{\cal D}_{23}D^2_{12}u{\cal D}_{31}v
A \nabla\phi_3\cdot\nabla\phi_3\\
&+\ell_{23}{\cal D}_{31}D^2_{23}u{\cal D}_{12}v  A\nabla\phi_1\cdot\nabla\phi_1
+\ell_{31}{\cal D}_{12}D^2_{31}u{\cal D}_{23}v A \nabla\phi_2\cdot\nabla\phi_2)dT\\
& -\sum_{T\in {\cal T}_h}\frac{  \ell^4_{12} \ell_{23}  \ell_{31}A\nabla \phi_3\cdot\nabla \phi_3 }{16} \int_T {\cal G}_{23} {\cal D}_{23}
{\cal D}_{12}^4u {\cal D}_{31}v dT\\
&-\sum_{T\in {\cal T}_h}\frac{  \ell^4_{23} \ell_{31}  \ell_{12} A\nabla \phi_1\cdot\nabla \phi_1 }{16} \int_T {\cal G}_{31} {\cal D}_{31}
{\cal D}_{23}^4u {\cal D}_{12}vdT\\
&-\sum_{T\in {\cal T}_h}\frac{\ell^4_{31} \ell_{12}  \ell_{23} A\nabla \phi_2\cdot\nabla \phi_2}{16} \int_T {\cal G}_{12} {\cal D}_{12}
{\cal D}_{31}^4u {\cal D}_{23}v dT
\\
= &-\frac{\ell_{12}\ell_{23}\ell_{31}}{12}\sum_{T\in{\cal T}_h} \int_{T} {\cal D}_{12}{\cal D}_{23}{\cal D}_{31}
(A\nabla\phi_3\cdot\nabla\phi_3 \ell_{12}{\cal D}_{12}
+A\nabla\phi_1\cdot\nabla\phi_1 \ell_{23}{\cal D}_{23}\\
&+A\nabla\phi_2\cdot\nabla\phi_2 \ell_{31}{\cal D}_{31})u vdT + E_h,\\
\end{split}
\end{equation}
where
\begin{equation*}\label{2.13:100}
\begin{split}
E_h=& -\sum_{T\in {\cal T}_h}\frac{  \ell^4_{12} \ell_{23}  \ell_{31}A\nabla \phi_3\cdot\nabla \phi_3 }{16} \int_T {\cal G}_{23} {\cal D}_{23}
{\cal D}_{12}^4u {\cal D}_{31}v dT\\
&-\sum_{T\in {\cal T}_h}\frac{  \ell^4_{23} \ell_{31}  \ell_{12} A\nabla \phi_1\cdot\nabla \phi_1 }{16} \int_T {\cal G}_{31} {\cal D}_{31}
{\cal D}_{23}^4u {\cal D}_{12}vdT\\
&-\sum_{T\in {\cal T}_h}\frac{\ell^4_{31} \ell_{12}  \ell_{23} A\nabla \phi_2\cdot\nabla \phi_2}{16} \int_T {\cal G}_{12} {\cal D}_{12}
{\cal D}_{31}^4u {\cal D}_{23}v dT.
\end{split}
\end{equation*}
As $\T_h$ is uniformly $A$-equilateral, then there is a constant $\alpha$ such that $A\nabla \phi_i\cdot \nabla\phi_i=\alpha$ for $i=1,2,3$. It follows that
\begin{equation*}
\begin{split}
& A\nabla\phi_3\cdot\nabla\phi_3 \ell_{12}{\cal D}_{12}
+A\nabla\phi_1\cdot\nabla\phi_1 \ell_{23}{\cal D}_{23}
+A\nabla\phi_2\cdot\nabla\phi_2 \ell_{31}{\cal D}_{31})u \\
= & \alpha (\ell_{12}{\cal D}_{12}
+ \ell_{23}{\cal D}_{23}
+ \ell_{31}{\cal D}_{31})u \\
= & 0.
\end{split}
\end{equation*}
Substituting the above into (\ref{2.13}) yields
\begin{equation*}
(A\nabla(u-u_I),\nabla v) = E_h.
\end{equation*}
Finally, since $|\nabla \phi_i|=O(\frac{1}{h})$ for $i=1, 2, 3$, then we have
\begin{equation}\label{2.13:200}
|E_h| \leq C h^4 \|u\|_5 \|\nabla v\|_0.
\end{equation}
Combining the last two gives rise to the following estimate
\begin{equation}\label{2.18:100}
|(A\nabla(u_h-u_I),\nabla v)| = |(A\nabla(u-u_I),\nabla v)| \leq C h^4 \|u\|_5 \|\nabla v\|_0
\end{equation}
for all $v\in S_0^h$. In particular, by setting $v=u_h-u_I$ we arrive at
$$
|(A\nabla(u_h-u_I),\nabla (u_h-u_I))| \leq C h^4 \|u\|_5 \|\nabla (u_h-u_I)\|_0,
$$
which implies the superconvergence error estimate (\ref{h1norm-new}).

Since $u_h-u_I=0$ on $\partial \Omega$, there holds
$$
\|u_h-u_I\|_{\infty} \leq C |\log(h)|^{\frac12 }\|\nabla(u_h-u_I)\|_0.
$$
It follows that the superconvergence estimate (\ref{maxnorm-new}) in the maximum norm holds true.

\section{On Sufficient Conditions for Superconvergence} The proof for Theorem \ref{thm1-new}, particularly the identify (\ref{2.13}), provides the following sufficient condition for superconvergence
\begin{equation}\label{SuperC-Condition:01}
A\nabla\phi_3\cdot\nabla\phi_3 \ell_{12}{\cal D}_{12}
+A\nabla\phi_1\cdot\nabla\phi_1 \ell_{23}{\cal D}_{23}
+A\nabla\phi_2\cdot\nabla\phi_2 \ell_{31}{\cal D}_{31}=0.
\end{equation}
The condition (\ref{SuperC-Condition:01}) is satisfied if the triangular elements are $A$-equilateral; or equivalently if $A\nabla\phi_i\cdot\nabla\phi_i = \alpha,\ i=1,2,3,$ for a fixed real number $\alpha$. The goal of this section is to present some examples of the finite element partitions that are $A$-equilateral, and thus superconvergence can be concluded for the corresponding Ritz-Galerkin finite element solutions.

\subsection{Example 1} Our first example is concerned with the reference element $\widehat{T}$ with vertices $\widehat{V}_1=(0,0)$, $\widehat{V}_2=(1,0)$, and $\widehat{V}_3=(1,1)$. The three shape functions for this reference element are given by
\begin{equation*}
\begin{split}
{\widehat\phi}_1 & = 1-\widehat{x},\\
{\widehat\phi}_2 & =\widehat{x}-\widehat{y}, \\
{\widehat\phi}_3 & =\widehat{y}.
\end{split}
\end{equation*}
For the coefficient matrix
$$
\widehat{A}=\left[
 \begin{array}{cc}
 a_{11} & a_{12} \\
 a_{12} & a_{22} \\
 \end{array}
 \right],
$$
it can be easily calculated that
\begin{equation*}
\begin{split}
\widehat{A}\widehat{\nabla} \widehat\phi_1 \cdot\widehat\nabla\widehat\phi_1 & = a_{11},\\
\widehat A\widehat\nabla \widehat\phi_3 \cdot\widehat\nabla\widehat\phi_3 & = a_{22},\\
\widehat A\widehat\nabla \widehat\phi_2 \cdot\widehat\nabla\widehat\phi_2 & = a_{11}+a_{22} - 2 a_{12}.
\end{split}
\end{equation*}
Thus, the reference element $\widehat{T}$ is $\widehat{A}$-equilateral if and only if
$$
a_{11}=a_{22} = a_{11}+a_{22} - 2 a_{12},
$$
or equivalently,
$$
a_{11}=a_{22}=\alpha, a_{12}=\alpha/2.
$$
The coefficient matrix $\widehat{A}$ is thus given by
$$
\widehat{A}=\alpha \left[
  \begin{array}{cc}
 1 & 0.5 \\
 0.5 & 1 \\
 \end{array}
 \right].
$$

\subsection{Example 2} Our second example is concerned with an element $T$ with vertices
$V_1=(0,0)$, $V_2=(1,0)$, and $V_3=(x_0,y_0)$, where $y_0\neq 0$. We claim that $T$ is $A$-equilateral if and only if
\begin{equation}\label{MyEstimate:001}
A=S\widehat{A}_\alpha S^T,
\end{equation}
where
$$
S=\left[%
\begin{array}{cc}
 1 & x_0-1 \\
 0 & y_0\\
\end{array}%
\right]
$$
and
\begin{equation}\label{A-hat-alpha}
 \widehat{A}_\alpha=\alpha \left[
 \begin{array}{cc}
 1 & 0.5 \\
 0.5 & 1 \\
 \end{array}
 \right].
\end{equation}
In fact, the element $T$ is linked to the reference element $\widehat{T}$ through the following linear map:
$$
\left[
\begin{array}{c}
 x \\
 y\\
\end{array}
\right]
=
\left[%
\begin{array}{cc}
 1 & x_0-1 \\
 0 & y_0\\
\end{array}%
\right]
\left[
\begin{array}{c}
 \widehat{x} \\
 \widehat{y}\\
\end{array}
\right].
$$
A straightforward calculation shows that $\nabla\phi_i = S^{-T} \widehat\nabla\widehat\phi_i$. It follows that
\begin{eqnarray*}
A\nabla\phi_i\cdot \nabla\phi_i &=& A S^{-T} \widehat\nabla\widehat\phi_i \cdot S^{-T} \widehat\nabla\widehat\phi_i\\
&=& S^{-1} A S^{-T} \widehat\nabla\widehat\phi_i \cdot \widehat\nabla\widehat\phi_i.
\end{eqnarray*}
Thus, from {\bf Example 1}, the triangle $T$ is $A$-equilateral if and only if
$$
S^{-1} A S^{-T} = \widehat{A}_\alpha,
$$
which is equivalent to (\ref{MyEstimate:001}).

\subsection{Example 3} In the third example, the triangular element $T$ has a generic position with vertices $V_i=(x_i,y_i),\ i=1,2,3$.  We claim that $T$ is $A$-equilateral if and only if
\begin{equation}\label{MyEstimate:005}
A=S\widehat{A}_\alpha S^T,
\end{equation}
where
\begin{equation}\label{MyTerm:S}
S=\left[%
\begin{array}{cc}
 x_2-x_1 & x_3-x_2 \\
 y_2-y_1 & y_3-y_2 \\
\end{array}%
\right]
\end{equation}
and
\begin{equation}\label{A-hat-alpha-new}
 \widehat{A}_\alpha=\alpha \left[
 \begin{array}{cc}
 1 & 0.5 \\
 0.5 & 1 \\
 \end{array}
 \right].
\end{equation}
Note that the element $T$ can be transformed to the reference element $\widehat{T}$ through the following affine map:
$$
\left[
\begin{array}{c}
 x -x_1\\
 y -y_1\\
\end{array}
\right]
=
\left[%
\begin{array}{cc}
 x_2-x_1 & x_3-x_2 \\
 y_2-y_1 & y_3-y_2 \\
\end{array}%
\right]
\left[
\begin{array}{c}
 \widehat{x} \\
 \widehat{y}\\
\end{array}
\right].
$$
A straightforward calculation shows that $\nabla\phi_i = S^{-T} \widehat\nabla\widehat\phi_i$. Thus, we have
\begin{eqnarray*}
A\nabla\phi_i\cdot \nabla\phi_i &=& A S^{-T} \widehat\nabla\widehat\phi_i \cdot S^{-T} \widehat\nabla\widehat\phi_i\\
&=& S^{-1} A S^{-T} \widehat\nabla\widehat\phi_i \cdot \widehat\nabla\widehat\phi_i.
\end{eqnarray*}
From {\bf Example 1}, the triangle $T$ is $A$-equilateral if and only if
$$
S^{-1} A S^{-T} = \widehat{A}_\alpha,
$$
which gives rise to (\ref{MyEstimate:005}).

\subsection{Invariance of $A$-equilateral elements}

Let $T$ be a triangular element with vertices $V_i=(x_i,y_i),\ i=1,2,3$. $T'$ is said to be a translation of $T$ if there exists a point $(x^*, y^*)$ such that $T'$ is given by the set $(x,y)+(x^*,y^*)$ for all $(x,y)\in T$. This translation shall be denoted as $T'=T+(x^*,y^*)$. For example, in Figure \ref{fig:rotations} the triangular element $T'=\Delta V'_1V'_2V'_3$ is a translation of the reference triangle $T=\Delta V_1V_2V_3$ as $V'_i=V_i+(1.25, -1)$ for $i=1, 2,3$.

\begin{lemma} (translation invariance) If $T$ is $A$-equilateral with $A\nabla \phi_i\cdot\nabla \phi_i=\alpha$ and $T'$ is a translation of $T$, then $T'$ is also $A$-equilateral with the same $\alpha$.
\end{lemma}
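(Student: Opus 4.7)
The plan is to exploit the fact that translations do not change gradients of linear functions, so the vector $\nabla\phi_i$ on $T$ and the corresponding $\nabla\phi_i'$ on $T'$ are literally the same constant vector in $\mathbb{R}^2$.

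First I would set up the natural correspondence between shape functions. Label the vertices of $T'$ by $V_i' = V_i + (x^*,y^*)$, and let $\phi_i'$ denote the shape function on $T'$ associated with $V_i'$. Because of the characterizing property that $\phi_i'(V_j') = \delta_{ij}$ and because the map $(x,y) \mapsto (x-x^*, y-y^*)$ sends $T'$ bijectively to $T$ and $V_i'$ to $V_i$, one has
\begin{equation*}
\phi_i'(x,y) = \phi_i(x - x^*,\, y - y^*),\qquad (x,y)\in T'.
\end{equation*}

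Second, since $\phi_i \in P_1(T)$, its gradient is a constant vector $\mathbf{c}_i \in \mathbb{R}^2$, and the chain rule yields $\nabla \phi_i'(x,y) = \nabla \phi_i(x-x^*,y-y^*) = \mathbf{c}_i$ everywhere on $T'$. Translations also preserve area, so $|T'|=|T|$. Putting these together, and using that $A$ is a constant matrix,
\begin{equation*}
(A\nabla \phi_i',\nabla \phi_i')_{T'} = |T'|\, A\mathbf{c}_i \cdot \mathbf{c}_i = |T|\, A\mathbf{c}_i\cdot \mathbf{c}_i = (A\nabla \phi_i,\nabla \phi_i)_T = \alpha
\end{equation*}
for $i=1,2,3$, which is exactly the $A$-equilateral condition (\ref{A-equilateral}) for $T'$ with the same constant $\alpha$.

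There is essentially no obstacle here: the lemma is a direct consequence of two facts, namely that gradients of $P_1$ functions are constants invariant under translation of the argument, and that the Lebesgue measure of a triangle is invariant under translation. The only point requiring even a line of care is verifying the pullback identity $\phi_i'(\cdot) = \phi_i(\cdot - (x^*,y^*))$, which follows from the uniqueness of the nodal basis on each triangle.
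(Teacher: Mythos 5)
Your proof is correct. It takes a slightly different route from the paper: the paper invokes the characterization from Example 3, namely that $T$ is $A$-equilateral with value $\alpha$ if and only if $A = S\widehat{A}_\alpha S^T$ where the matrix $S$ in (\ref{MyTerm:S}) is built from vertex differences, and then observes that $S$ (hence the whole characterization) is unchanged by translation. You instead argue directly at the level of the shape functions: the pullback identity $\phi_i'(\cdot)=\phi_i(\cdot-(x^*,y^*))$ shows the gradients are literally the same constant vectors, and $|T'|=|T|$, so the quantities $A\nabla\phi_i\cdot\nabla\phi_i$ (and their integrals over the element) are unchanged. Both arguments reduce to the fact that everything involved depends only on vertex differences; yours is more elementary and self-contained, needing no appeal to the affine-map machinery of Examples 1--3, while the paper's version fits naturally into its subsequent discussion of which rotations and reflections also preserve the $A$-equilateral property, since that analysis is carried out entirely in terms of the matrix $S$.
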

\begin{proof}
The Example 3 shows that $T$ is $A$-equilateral with $A\nabla \phi_i\cdot\nabla \phi_i=\alpha$ if and only if $A = S \widehat{A}_\alpha S^T,$
where $S$ and $\widehat{A}_\alpha$ are given by (\ref{MyTerm:S}) and (\ref{A-hat-alpha-new}). As the matrix $S$ is translation invariant and $\widehat{A}_\alpha$ is fixed, the same representation then holds true on $T'$. It follows that $T'$ is $A$-equilateral with the same value $\alpha$.
\end{proof}

\begin{figure}
\begin{center}
\begin{tikzpicture}[rotate=0]
    \path (0,0) coordinate (A1);
    \path (2,0) coordinate (A2);
    \path (2,2) coordinate (A3);
    \path (0,2) coordinate (A4);
    \path (0,-2) coordinate (A5);
    \path (-2,-2) coordinate (A6);
    \path (-2,0) coordinate (A7);

    \path (2.5,-2) coordinate (A1P);
    \path (4.5,-2) coordinate (A2P);
    \path (4.5,0) coordinate (A3P);

    \path (5,0) coordinate (XR);
    \path (-5,0) coordinate (XL);
    \path (0,3) coordinate (YU);
    \path (0,-3) coordinate (YD);
    \path (-0.06,0.2) coordinate (A1M);

   \draw[->,thick] (XL) -- (XR);
   \draw[->,thick] (YD) -- (YU);

   \draw node[left] at (A1M) {$V_1$};
    \draw node[below] at (A2) {$V_2$};
    \draw node[right] at (A3) {$V_3$};
    \draw node[left] at (A4) {$V_4$};
    \draw node[right] at (A5) {$V_5$};
    \draw node[left] at (A6) {$V_6$};
    \draw node[above] at (A7) {$V_7$};
    \draw node[right] at (XR) {$X$};
    \draw node[above] at (YU) {$Y$};



    \definecolor{c1}{RGB}{0,129,188}
    \definecolor{c2}{RGB}{252,177,49}
    \definecolor{c3}{RGB}{35,34,35}
    \draw (A1) -- (A2) -- (A3) -- cycle;
    \shade [left color=c1,right color=c2] (A1) -- (A2) -- (A3) -- cycle;

    \draw (A1) -- (A4) -- (A3) -- cycle;
    \shade [left color=c2,right color=c1] (A1) -- (A4) -- (A3) -- cycle;

    \draw (A1) -- (A5) -- (A6) -- cycle;
    \shade [left color=c1,right color=c2] (A1) -- (A5) -- (A6) -- cycle;

    \draw (A1) -- (A6) -- (A7) -- cycle;
    \shade [left color=c2,right color=c1] (A1) -- (A6) -- (A7) -- cycle;

\draw (A1P) -- (A2P) -- (A3P) -- cycle;
    \shade [left color=c1,right color=c2] (A1P) -- (A2P) -- (A3P) -- cycle;
    \draw node[below] at (A1P) {$V'_1$};
    \draw node[below] at (A2P) {$V'_2$};
    \draw node[above] at (A3P) {$V'_3$};



\end{tikzpicture}
\end{center}
\caption{$A$-equilateral triangles generated by the reference triangle $\widehat{T}=\Delta V_1V_2V_3$ through translation, rotation and reflections.}
\label{fig:rotations}
\end{figure}
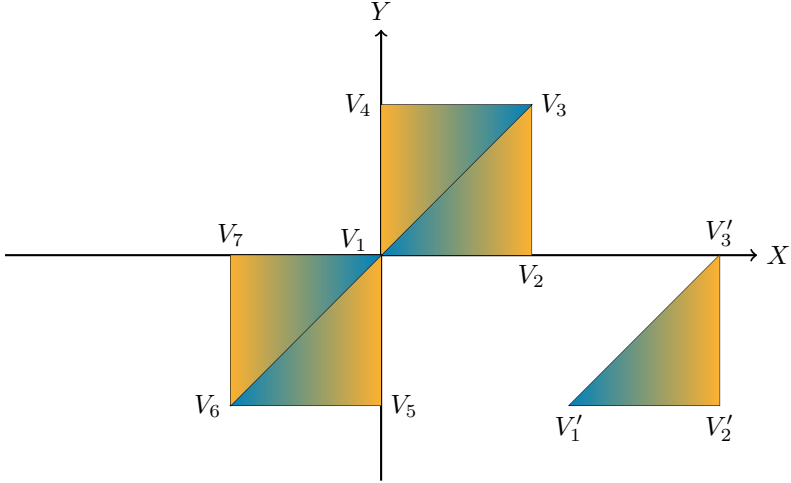

Assume that the element $T$ is $A$-equilateral with value $A\nabla \phi_i\cdot\nabla \phi_i=\alpha$. We would like to know among all the triangles that share vertex $V_1$ with $T$, which are also $A$-equilateral with the same value of $\alpha$. For simplicity, we shall consider the case of $T=\widehat{T}$ with $\alpha=2$. From Example 1, the matrix $\widehat{A}$ must be given by
$$
\widehat{A} = \left[
  \begin{array}{cc}
 2 & 1 \\
 1 & 2 \\
 \end{array}
 \right].
$$
Let $\tilde T$ be an arbitrary triangle that shares $\tilde V_1=(0,0)$ with the element $\widehat{T}$. Without loss of generality, we may assume the other two vertices of $\tilde T$ are given by $\tilde V_2=(\tilde x_2,\tilde y_2)$ and $\tilde V_3=(\tilde x_3,\tilde y_3)$. If $\tilde T$ is also $\widehat{A}$-equilateral with value $\alpha=2$, then from Example 3 we must have
\begin{equation}\label{Interesting:001}
\widehat{A}=\tilde A = \tilde S \widehat{A} \tilde S^T,
\end{equation}
where
\begin{equation}\label{MyTerm:STilde}
\tilde S=\left[%
\begin{array}{cc}
 \tilde x_2 & \tilde x_3-\tilde x_2 \\
 \tilde y_2 & \tilde y_3-\tilde y_2 \\
\end{array}%
\right].
\end{equation}
It follows from (\ref{Interesting:001}) that $(\det(\tilde{S}))^2=1$. Furthermore, a tedious calculation can be performed to show that the matrix $\tilde{S}$ can only take the following values:
$$
\tilde S=\left[%
\begin{array}{cc}
  1 & 0 \\
  0 & 1 \\
\end{array}%
\right],\
\left[\begin{array}{cc}
  -1 & 0 \\
  0 & -1 \\
\end{array}%
\right],\
\left[\begin{array}{cc}
  0 & 1 \\
  1 & 0 \\
\end{array}%
\right],\
\left[\begin{array}{cc}
  0 & -1 \\
  -1 & 0 \\
\end{array}%
\right].
$$
As illustrated in Figure \ref{fig:rotations}, the first value of $\tilde S$ corresponds to the original triangle $\widehat{T}=\Delta V_1V_2V_3$, the second gives the triangle $\Delta V_1V_7V_6$, the third one yields $\Delta V_1V_3V_4$, and the last one gives us the triangle $\Delta V_1V_6V_5$. A finite element partition with the superconvergence as described in Theorem \ref{thm1-new} must be formed by any subset of the elements in Figure \ref{fig:rotations} through translations with various values of $(x^*, y^*)$ that gives a valid computational partition of the domain.

\section{Numerical Tests}
In this section, we report some numerical results that confirm the superconvergence established in Theorem \ref{thm1-new} for the Ritz-Galerkin finite element solutions of the second order elliptic model problem (\ref{a}). The Ritz-Galerkin finite element method was implemented on uniform finite element partitions consisting of uniformly $A$-equilateral triangles. In our numerical tests, the exact solutions are taken as $u= \sin(x)\sin(y)$ and $u=\cos(x)\cos(y)$, respectively. The right-hand side function $f$ is computed to match the exact solutions.

Our first numerical example was conducted on the unit square domain $\Omega=(0,1)^2$. The model problem has the coefficient tensor $A=\left(
  \begin{array}{cc}
  2 & 1 \\
  1 & 2\\
  \end{array}
  \right)
$. The finite element partition was constructed so that it is uniformly $A$-equilateral.  Tables \ref{a1}-\ref{a2} contains the error information plus rate of convergence in the $L^2$ norm, $H^1$ semi-norm and $L^\infty$ norm. It is clear that the convergence in all three norms are of order $O(h^4)$ for both the exact solutions $u=\sin(x) \sin(y)$ and $u= \cos(x)\cos(y)$. The numerical results are consistent with the theory established in this paper.

\begin{table}[h!]
\begin{center}
 \caption{ Numerical error and convergence rates in the $L^2$ norm, $H^1$ semi-norm and $L^\infty$ norm with exact solution $u=\sin(x) \sin(y)$ and coefficient tensor $A=[2,1;1,2]$.} \label{a1}\begin{tabular}{|c|c|c|c|c|c|c|}
\hline
$1/h$ & $\|u_h-u_I\|_0$ & order & $\|\nabla(u_h-u_I)\|_0$ & order & $\|u_h-u_I\|_{\infty}$ & order\\
\hline
$2$ & 3.2847e-005 & & 1.3139e-004 & & 4.6453e-005 & \\
\hline
$4$ & 2.1222e-006 & 3.9521 & 1.0867e-005 & 3.5958 & 2.7046e-006 & 4.1023 \\
\hline
$8$ & 1.3194e-007 & 4.0076 & 7.2445e-007 & 3.9069 & 1.7945e-007 & 3.9138 \\
\hline
$16$ & 8.23041e-009 & 4.0027 & 4.6022e-008 & 3.9765 & 1.1186e-008 & 4.0039\\
\hline
$32$ & 5.1417e-010 & 4.0006 & 2.8883e-009 & 3.9940 & 7.0079e-010 & 3.9965 \\
\hline
$64$ & 3.2149e-011 & 3.9994 & 1.8078e-010 & 3.9979 & 4.3828e-011 & 3.9991\\
\hline
\end{tabular}
\end{center}
\end{table}

\begin{table}[h!]
\begin{center}
 \caption{ Numerical error and convergence rates in the $L^2$ norm, $H^1$ semi-norm and $L^\infty$ norm with exact solution $u=\cos(x)\cos(y)$ and coefficient tensor $A=[2,1;1,2]$.} \label{a2} \begin{tabular}{|c|c|c|c|c|c|c|}
\hline
$1/h$ & $\|u_h-u_I\|_0$ & order & $\|\nabla(u_h-u_I)\|_0$ & order & $\|u_h-u_I\|_{\infty}$ & order\\
\hline
$2$ & 3.5382e-005 & & 1.4153e-004 & & 5.0037e-005 & \\
\hline
$4$ & 2.2782e-006 & 3.9571 & 1.1620e-005 & 3.6065 & 2.9128e-006 & 4.1025\\
\hline
$8 $ & 1.4162e-007 & 4.0078 & 7.7330e-007 & 3.9093
& 1.9125e-007 &3.9288 \\
\hline
$16$ & 8.8348e-009 & 4.0027 & 4.9105e-008 & 3.9771 & 1.1922e-008 &4.0038 \\
\hline
$32$ & 5.5192e-010 & 4.0007 & 3.0814e-009 & 3.9942 & 7.4860e-010 &3.9933 \\
\hline
$64$ & 3.4433e-011 & 4.0025 & 1.9247e-010 & 4.0009 & 4.6708e-011 &4.0025 \\
\hline
\end{tabular}
\end{center}
\end{table}

The domain in the second test case is a parallelogram with vertices
$$
(0,0), \ (1.1462,0.9042), \ (0.6941, 2.2924),\ (-0.4521,1.3882).
$$
The coefficient tensor is given by $A=\left(
 \begin{array}{cc}
 2 & 2 \\
 2 & 8 \\
 \end{array}
 \right)
$, and the finite element partitions again consist of only $A$-equilateral triangles as required in Theorem \ref{thm1-new}.
Tables \ref{a3}-\ref{a4} illustrate the numerical performance with rate of convergence computed in various Sobolev norms. It can be seen that the numerical results confirm the theoretical predictions developed in the previous sections.

\begin{table}[h!]
\begin{center}
 \caption{ Numerical error and convergence rates in the $L^2$ norm, $H^1$ semi-norm and $L^\infty$ norm with exact solution $u=\sin(x)\sin(y)$ and coefficient tensor $A=[2,2;2,8]$.}
\label{a3}
 \begin{tabular}{|c|c|c|c|c|c|c|}
\hline
$1/h$ & $\|u_h-u_I\|_0$ & order & $\|\nabla(u_h-u_I)\|_0$ & order & $\|u_h-u_I\|_{\infty}$ & order\\
\hline
$2$ & 2.3693e-005 & & 9.4770e-005 & & 3.3506e-005 & \\
\hline
$4$ &8.4839e-006 & 1.4816 & 6.3017e-005 & 0.5887 & 1.3099e-005 & 1.3550\\
\hline
$8$ & 5.3785e-007 & 3.9794 & 4.6241e-006 & 3.7685 & 7.8251e-007 & 4.0652\\
\hline
$16$ & 3.3512e-008 & 4.0044 & 3.0020e-007 & 3.9452 & 4.9613e-008 & 3.9793\\
\hline
$32$ & 2.0923e-009 & 4.0015 & 1.8943e-008 & 3.9862 & 3.1083e-009 & 3.9965
\\
\hline
$64 $ & 1.3074e-010 & 4.0004 & 1.1868e-009 & 3.9965 & 1.9455e-010 & 3.9979\\
\hline
\end{tabular}
\end{center}
\end{table}

\begin{table}[h!]
\begin{center}
 \caption{ Numerical error and convergence rates in the $L^2$ norm, $H^1$ semi-norm and $L^\infty$ norm with exact solution $u=\cos(x)\cos(y)$ and coefficient tensor $A=[2,2;2,8]$.}\label{a4} \begin{tabular}{|c|c|c|c|c|c|c|}
\hline
$1/h$ & $\|u_h-u_I\|_0$ & order & $\|\nabla(u_h-u_I)\|_0$ & order & $\|u_h-u_I\|_{\infty}$ & order\\
\hline
$2 $ & 7.5777-005 & & 3.0311e-004 &
& 1.0716e-004 & \\
\hline
$4$ & 9.6454e-006 & 2.9738& 6.7760e-005 & 2.1613 & 1.6506e-005 &2.6987\\
\hline
$8$ & 6.0766e-007 & 3.9885& 4.9168e-006 & 3.7847 & 9.9509e-007 &4.0520\\
\hline
$16$ & 3.7857e-008 & 4.0046 & 3.1850e-007 & 3.9483 & 6.3017e-008 &3.9810\\
\hline
$32$ & 2.3637e-009 & 4.0014& 2.0088e-008 & 3.9869 & 3.9341e-009 &4.0017\\
\hline
$64$ &1.4770e-010 & 4.0003 & 1.2584e-009 & 3.9967 & 2.4582e-010 & 4.0003 \\
\hline
\end{tabular}
\end{center}
\end{table}

Our last numerical test was conducted on another parallelogram domain with vertexes
$$
(0,0), \ (0.7917,0.7672), \ (1.1238, 1.8184),\ (0.3322, 1.0512).
$$
The coefficient tensor is given by $A=\left(
 \begin{array}{cc}
 2 & 3 \\
 3 & 5 \\
 \end{array}
 \right)
$, and the finite element partition can be constructed to satisfy the uniform $A$-equilateral property as required by Theorem \ref{thm1-new}. Tables \ref{a5}-\ref{a6} show that the convergence rates in various norms. The numerical results are very much in consistency with the superconvergence developed in Theorem \ref{thm1-new}.

\begin{table}[h!]
\begin{center}
\caption{ Numerical error and convergence rates in the $L^2$ norm, $H^1$ semi-norm and $L^\infty$ norm with exact solution $u=\sin(x)\sin(y)$ and coefficient tensor $A=[2,3;3,5]$.} \label{a5}\begin{tabular}{|c|c|c|c|c|c|c|}
\hline
$1/h$ & $\|u_h-u_I\|_0$ & order & $\|\nabla(u_h-u_I)\|_0$ & order & $\|u_h-u_I\|_{\infty}$ & order\\
\hline
$2 $ & 6.1090e-005 & &2.4436e-004 & & 8.6395e-005 & \\
\hline
$4 $ & 8.1524e-006 & 2.9056& 5.8490e-005 & 2.0628 & 1.4715e-005 &2.5536\\
\hline
$8$ & 5.1367e-007& 3.9883& 4.2611e-006 & 3.7789 & 8.8043e-007 &4.0629\\
\hline
$16$ & 3.1999e-008 & 4.0047 & 2.7633e-007 & 3.9468 & 5.4626e-008 &4.0106\\
\hline
$32$ & 1.9980e-009 & 4.0013 & 1.7433e-008 & 3.9865 & 3.4292e-009 &3.9936\\
\hline
$64$ & 1.2489e-010 & 3.9999 & 1.0922e-009 & 3.9965 & 2.1480e-010 &3.9968 \\
\hline
\end{tabular}
\end{center}
\end{table}

\begin{table}[h!]
\begin{center}
\caption{ Numerical error and convergence rates in the $L^2$ norm, $H^1$ semi-norm and $L^\infty$ norm with exact solution $u=\cos(x)\cos(y)$ and coefficient tensor $A=[2,3;3,5]$.}\label{a6} \begin{tabular}{|c|c|c|c|c|c|c|}
\hline
$1/h$ & $\|u_h-u_I\|_0$ & order & $\|\nabla(u_h-u_I)\|_0$ &order & $\|u_h-u_I\|_{\infty}$ & order\\
\hline
$2 $ & 6.1392e-005 & & 2.4557e-004 & &
 8.6821e-005 & \\
\hline
$4$ & 8.1616e-006 &2.9111 & 5.8523e-005 &2.0690 & 1.4733e-005 & 2.5590\\
\hline
$8$ & 5.1423e-007 & 3.9884& 4.2631e-006 &3.7790 & 8.8150e-007 &4.0629\\
\hline
$16$ & 3.2034e-008 & 4.0047& 2.7645e-007 & 3.9468& 5.4693e-008 &4.0105\\
\hline
$32$& 2.0002e-009 & 4.0014& 1.7440e-008 & 3.9865& 3.4338e-009 &3.9935\\
\hline
$64$ & 1.2487e-010 & 4.0016 &1.0920e-009 &3.9973 & 2.1480e-010 &3.9988 \\
\hline
\end{tabular}
\end{center}
\end{table}

\bigskip

\section{Appendix}

In this section, we shall derive the Euler-MacLaurin formula that plays a crucial role in the superconvergence analysis for the finite element solution of the second order elliptic problem. As the Euler-MacLaurin formula can be found in most standard textbooks, the presentation of this formula is merely for self-completeness of the mathematical analysis.

\begin{lemma}\label{eul} (Euler-MacLaurin Formula)
Assume that $f(x)$ is sufficiently regular satisfying $f(x)\in H^4([a,b])$. There holds
\begin{equation}\label{ML}
\int_a^b f(x)dx=\int_a^b f_I(x)dx - \frac{(b-a)^2}{12} \int_a^b f''(x)dx+{\cal R}(x),
\end{equation}
where $f_I(x)$ is the linear interpolation of $f(x)$ on the interval $[a,b]$ given by $f_I(x)=\frac{b-x}{b-a}f(a)+\frac{x-a}{b-a}f(b)$, ${\cal R}(x)$ is the reminder term given by
$$
{\cal R}(x)=\left(\frac{b-a}{2}\right)^4 \int_a^b {\cal G}(x) f^{(4)}(x)dx
$$
with the weight function ${\cal G}(x)=\frac{1}{4!}\Big( \left(\frac{2}{b-a}\right)^2\left(x-\frac{a+b}{2}\right)^2-1\Big)^2$.
\end{lemma}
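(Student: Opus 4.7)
The plan is to reduce identity (\ref{ML}) to a four-fold integration by parts against a specifically chosen kernel. First, using $\int_a^b f_I(x)\, dx = \tfrac{b-a}{2}\bigl(f(a)+f(b)\bigr)$ (the trapezoidal value of a linear function) and $\int_a^b f''(x)\, dx = f'(b)-f'(a)$ (fundamental theorem of calculus), I would rewrite the target identity as
\[
\int_a^b \phi(x)\, f^{(4)}(x)\, dx = \int_a^b f(x)\, dx - \frac{b-a}{2}\bigl(f(a)+f(b)\bigr) + \frac{(b-a)^2}{12}\bigl(f'(b)-f'(a)\bigr),
\]
where $\phi(x) := \bigl(\tfrac{b-a}{2}\bigr)^4 {\cal G}(x)$.

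Next, I would introduce the scaled variable $u := \tfrac{2}{b-a}\bigl(x-\tfrac{a+b}{2}\bigr)$, so that $\phi(x) = \tfrac{1}{24}\bigl(\tfrac{b-a}{2}\bigr)^4 (u^2-1)^2$, and compute its successive $x$-derivatives. The double-root factor $(u^2-1)^2$ makes $\phi(a)=\phi(b)=0$ and $\phi'(a)=\phi'(b)=0$ immediate, while a short direct calculation yields $\phi''(a)=\phi''(b)=\tfrac{(b-a)^2}{12}$, $\phi'''(a)=-\tfrac{b-a}{2}$, $\phi'''(b)=\tfrac{b-a}{2}$, and crucially $\phi^{(4)}\equiv 1$.

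Finally, I would apply integration by parts four times to obtain
\[
\int_a^b \phi(x)\, f^{(4)}(x)\, dx = \bigl[\phi f''' - \phi' f'' + \phi'' f' - \phi''' f\bigr]_a^b + \int_a^b \phi^{(4)}(x)\, f(x)\, dx.
\]
Substituting $\phi^{(4)}\equiv 1$ together with the boundary values listed above, the bracket collapses to exactly $-\tfrac{b-a}{2}(f(a)+f(b)) + \tfrac{(b-a)^2}{12}(f'(b)-f'(a))$, while the bulk integral becomes $\int_a^b f(x)\, dx$. This matches the rewritten identity and therefore establishes (\ref{ML}).

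The only point requiring real care is the bookkeeping for the four boundary values $\phi^{(k)}(a),\phi^{(k)}(b)$ for $k=0,1,2,3$; once these are in hand, the rest is mechanical. An alternative (less direct) derivation would verify (\ref{ML}) on the basis of polynomials of degree at most three, where both sides agree because the remainder ${\cal R}$ vanishes when $f^{(4)}\equiv 0$, and then invoke a Peano-kernel argument to extend the identity to all $f\in H^4([a,b])$; however, the integration-by-parts route is more transparent and simultaneously explains why the specific quartic weight ${\cal G}$ is the correct one.
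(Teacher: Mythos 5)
Your proof is correct and is essentially the paper's own argument run in reverse: the paper starts from $\int_a^b(f-f_I)\,dx$ and integrates by parts four times to \emph{generate} the kernel $\tfrac{1}{4!}(u^2-1)^2$, while you start from the known kernel $\phi$ and integrate by parts four times to recover $\int_a^b f\,dx-\int_a^b f_I\,dx+\tfrac{(b-a)^2}{12}\int_a^b f''\,dx$. Your boundary values $\phi^{(k)}(a),\phi^{(k)}(b)$ for $k=0,1,2,3$ and the identity $\phi^{(4)}\equiv 1$ all check out, so the bookkeeping closes; the only cosmetic difference is that the paper performs the computation on the reference interval $[-1,1]$ and then rescales, whereas you work on $[a,b]$ directly via the substitution $u=\tfrac{2}{b-a}\bigl(x-\tfrac{a+b}{2}\bigr)$.
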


\begin{proof}
It suffices to derive the Euler-MacLaurin formula on the reference interval $[a,b]=[-1,1]$. To this end, we use the usual integration by parts to obtain
\begin{equation*}
\begin{split}
\int_{-1}^1 (f(t)- f_I(t))dt =& -\int_{-1}^1 f'(t)t dt\\
=& -\frac{1}{2}\int_{-1}^1 f'(t)d(t^2-1)
= \frac{1}{2}\int_{-1}^1  f''(t)  (t^2-1) dt  \\
=& - \frac{1}{3} \int_{-1}^1  f''(t) dt+ \frac{1}{3!}  \int_{-1}^1 f''(t)d (t(t^2-1)) \\
=& - \frac{1}{3} \int_{-1}^1  f''(t) dt- \frac{1}{3!}  \int_{-1}^1 f'''(t)t(t^2-1)dt \\
=& - \frac{1}{3} \int_{-1}^1  f''(t) dt- \frac{1}{4!}  \int_{-1}^1 f'''(t)  d(t^2-1)^2 \\
=&  - \frac{1}{3} \int_{-1}^1  f''(t) dt+\frac{1}{4!}  \int_{-1}^1 (t^2-1)^2f^{(4)}(t)  dt.\\
\end{split}
\end{equation*}
The Euler-MacLaurin formula on the general interval $[a,b]$ can now be obtained through
the transformation $x=h(t)=\frac{a+b}{2} +\frac{b-a}{2} t$ and the above expansion. Details are left to interested readers as an exercise. This completes the proof of the lemma.
\end{proof}

\end{document}